\documentclass[11pt,leqno]{article}
\usepackage[utf8]{inputenc}
\usepackage{amsmath,amssymb,amsthm,latexsym}
\usepackage{indentfirst}
\usepackage{amsmath}

\newtheorem{theorem}{\bf \large Theorem}[section]

\newtheorem{corollary}{\bf \large Corollary}[section]
\newtheorem{lemma}{\bf \large Lemma}[section]

\title{Rigidity of closed $\lambda$-self-expanders}

\author {{Tongzhu Li,~~~Mengdan Qi} \\
\small{Department of Mathematics, Beijing Institute of
Technology, Beijing, 100081, China.} \\
\small{ E-mail:litz@bit.edu.cn,~ mengdanqi@bit.edu.cn.}}
\date{}

\begin{document}
\maketitle
\begin{abstract}
A $\lambda$-self-expander $x: M^n\to \mathbb{R}^{n+1}$ is  the solution of the isoperimetric problem of weight $e^{\frac{|x|^2}{4}}$.
In this paper, we prove that  the closed  $\lambda$-self-expander is a round sphere centered at the origin under some  curvature conditions.  These curvature conditions mainly include the scalar curvature, the mean curvature, and the squared norm of
the second fundamental form.
\end{abstract}

\medskip\noindent
{\bf 2020 Mathematics Subject Classification:} 53C42, 49Q20.
\par\noindent {\bf Key words:}  $\lambda$-self-expanders; Rigidity; Mean curvature; scalar curvature.

\section{Introduction}

An immersed hypersurface $x: M^n \to \mathbb{R}^{n+1}$ is called a $\lambda$-self-expander if it satisfies
\begin{equation}\label{def}
    H - \frac12 \langle x,\xi \rangle = \lambda,
\end{equation}
where $\xi$ is  the inward unit normal vector field of $x$,  $H$ is the mean curvature of $x$, and $\lambda$ is a constant.
The concept of $\lambda$-self-expanders was first proposed by Ancari and Cheng in \cite{AncariCheng}. When $\lambda=0$, the equation reduces to the defining equation of standard self-expanders, so $\lambda$-self-expanders serve as a natural generalization of classical self-expanders. Moreover, $\lambda$-self-expanders are critical points of a weighted area functional under volume constraint, with the weight function given by $e^{\frac{|x|^2}{4}}$.
These hypersurfaces are dual to the well-studied $\lambda$-hypersurfaces, which satisfy $H+\tfrac12\langle x,n\rangle=\lambda$ and correspond to self-shrinking weighted solitons. Despite their natural variational origin, the global rigidity and classification theory for  closed $\lambda$-self-expanders remains significantly underdeveloped compared to the mature literature on $\lambda$-hypersurfaces, and few sharp spherical rigidity results have been established under natural curvature sign conditions.

In recent years, the rigidity theory of self-expanders has been extensively developed from various perspectives. The study of self-expanders begins with the lowest-dimensional case. Ishimura \cite{Ishimura} and Halldorsson \cite{Halldorsson} independently gave a complete classification of self-expander curves in $\mathbb{R}^2$. Ancari and Cheng \cite{AC} proved that the surfaces $\Gamma \times \mathbb{R}$ with the product metric are the only complete self-expander surfaces immersed in $\mathbb{R}^3$ with constant scalar curvature, where $\Gamma$ is a complete self-expander curve immersed in $\mathbb{R}^2$. And they also gave a complete classification of $n$-dimensional self-expanders in $\mathbb{R}^{n+1}$ with non-negative constant scalar curvature. In \cite{LSY}, Luo, Sun and Yin gave a slightly different proof of it. Furthermore, Cheng and Zhou \cite{CZ} studied the drifted Laplacian $L=\Delta+\frac12\langle x,\nabla\cdot\rangle$ on complete properly immersed self-expanders. Bernstein and Wang developed a comprehensive theory of asymptotically conical self-expanders in a series of papers \cite{BW2,BW1,BW3}. Smoczyk \cite{Smoczyk} studied mean convex self-expanding hypersurfaces and proved that a complete mean convex self-expander is a product of a self-expanding curve and a flat subspace if and only if the function $\frac{|h|^2}{|H|^2}$ attains a local maximum. As a corollary, complete mean convex self-expanders have strictly positive scalar curvature if they are smoothly asymptotic to cones of non-negative scalar curvature.

The constancy of the squared norm of the second fundamental form has also played an important role in rigidity and classification problems for self-similar hypersurfaces. For self-shrinkers, Cheng and Peng \cite{ChengPeng2015} studied complete self-shrinkers by means of a generalized maximum principle and obtained classification results under the assumption that the squared norm of the second fundamental form is constant. In dimension three, Cheng, Li and Wei \cite{ChengLiWei2022} classified complete self-shrinkers in $\mathbb{R}^{4}$ under the assumptions that $S$ and the fourth-order curvature invariant $f_4$ are constant. More recently, Fu and Yang \cite{FuYang2024} obtained further rigidity results for self-shrinkers with constant squared norm of the second fundamental form.

In the related theory of $\lambda$-hypersurfaces, Pengpeng Cheng and Tongzhu Li \cite{ChengLi2026} studied complete $\lambda$-hypersurfaces with constant squared norm of the second fundamental form and obtained classification results under the additional assumption that the third-order curvature invariant $f_3$ is constant. Their result includes, as a special case, a classification of complete self-shrinkers with constant $S$ and constant $f_3$. These works show that the condition $S=\mathrm{constant}$ is a natural and effective rigidity assumption in the study of weighted self-similar hypersurfaces.

For self-expanders, rigidity results under the assumption $S=\mathrm{constant}$ are comparatively more limited. Li and Wei \cite{LW} classified three-dimensional complete self-expanders in $\mathbb{R}^{4}$ under the assumptions that the squared norm $S$ of the second fundamental form and $f_3=\operatorname{tr}(h^3)$ are both constant. Subsequently, Li and Mi \cite{LiMi2025} obtained a classification of three-dimensional complete self-expanders in $\mathbb{R}^{4}$ assuming that $S$ and the fourth-order curvature invariant $f_4$ are constant. These results naturally lead to the question of whether the constancy of $S$ alone is sufficient to force rigidity when compactness is imposed.

Despite the comprehensive structural descriptions of classical self-expanders discussed above, the rigidity theory of $\lambda$-self-expanders is still relatively less developed. The notion of $\lambda$-self-expanders was introduced by Ancari and Cheng \cite{AncariCheng}, who established several fundamental rigidity and classification results. In particular, they obtained rigidity results for closed convex $\lambda$-self-expanders, classified complete $\lambda$-self-expanders with constant mean curvature, and derived rigidity criteria for closed $\lambda$-self-expanders under additional curvature assumptions involving the second fundamental form.

Further rigidity results have subsequently been obtained under different geometric assumptions. In dimension two, Deng, Yao, Yu and Zhang \cite{DengYu2024} obtained a local classification of $\lambda$-self-expanders with constant squared norm of the second fundamental form. This suggests considering whether the constancy of $S$ yields a global rigidity result for closed immersed $\lambda$-self-expanders in arbitrary dimension. More recently, Bai and Xia \cite{baixia2026alexandrov}, together with Florian John and Lauro Silini \cite{johnsilini2026}, have independently and simultaneously proved an Alexandrov-type characterization: any closed embedded \(\lambda\)-self-expander in Euclidean space must be a round sphere centered at the origin.

Curvature pinching provides another natural approach to rigidity problems for $\lambda$-self-expanders. Ancari and Cheng \cite{AncariCheng} obtained a gap theorem for closed $\lambda$-self-expanders under a pointwise curvature bound involving the second fundamental form.

Motivated by the above results, we study spherical rigidity for closed immersed $\lambda$-self-expanders under two different curvature conditions: the constancy of the squared norm of the second fundamental form and a scalar curvature pinching condition.
We establish the following two rigidity theorems.

\begin{theorem}\label{thm1}
Let $x:M^n\to \mathbb{R}^{n+1}$ be a closed immersed $\lambda$-self-expander. If the squared norm of
the second fundamental form is constant, then $M^n$ is a round sphere centered at the origin.
\end{theorem}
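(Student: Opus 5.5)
The plan is to reduce Theorem \ref{thm1} to the case of constant mean curvature. Ancari and Cheng \cite{AncariCheng} classified complete $\lambda$-self-expanders with constant $H$, and the only closed ones are spheres centered at the origin. So it suffices to show $\nabla H\equiv 0$.

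Throughout I work with the drift operator $L=\Delta+\frac12\langle x,\nabla\cdot\rangle$. It is self-adjoint with respect to $d\mu=e^{\frac{|x|^2}{4}}dv$, so on closed $M$ we have $\int_M Lf\,d\mu=0$ and $\int_M fLg\,d\mu=-\int_M\langle\nabla f,\nabla g\rangle d\mu$.

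\textbf{Step 1: basic formulas.} With $\xi$ inward, $\Delta x=H\xi$ and $\xi_{,i}=-h_{ik}e_k$. Differentiating (\ref{def}) gives
\begin{equation*}
H_{,i}=-\tfrac12 h_{ik}\langle x,e_k\rangle,
\end{equation*}
and then
\begin{equation*}
H_{,ij}=-\tfrac12 h_{ikj}\langle x,e_k\rangle-\tfrac12 h_{ij}-h_{ik}h_{kj}(H-\lambda).
\end{equation*}
From these I obtain
\begin{equation*}
LH=-\tfrac12 H-S(H-\lambda),\qquad
\tfrac12 LS=|\nabla h|^2-\tfrac12 S-S^2+\lambda f_3,\qquad
\tfrac12 L|x|^2=n+2H(H-\lambda)+\tfrac12|x^T|^2 ,
\end{equation*}
where the first uses the derivative of $H$, the second uses Simons' identity, and $f_3=\operatorname{tr}h^3$.

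\textbf{Step 2: integral identities when $S$ is constant.} Integrating $LH$ gives
\begin{equation*}
(S+\tfrac12)\int_M (H-\lambda)\,d\mu=-\tfrac{\lambda}{2}\int_M d\mu .
\end{equation*}
Pairing $LH$ with $H-\lambda$ gives
\begin{equation*}
\int_M|\nabla H|^2d\mu=(S+\tfrac12)\int_M(H-\lambda)^2d\mu+\tfrac{\lambda}{2}\int_M(H-\lambda)d\mu .
\end{equation*}
Since $S$ is constant, $LS=0$, and Simons gives the pointwise identity
\begin{equation*}
|\nabla h|^2=\tfrac12 S+S^2-\lambda f_3 .
\end{equation*}
Integrating $L|x|^2$ gives one more relation among $\int(H-\lambda)^2$, $\int|x^T|^2$ and $\int d\mu$.

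\textbf{Step 3: closing the argument.} The first two identities combine to
\begin{equation*}
\int|\nabla H|^2d\mu=(S+\tfrac12)\Big[\int(H-\lambda)^2d\mu-\frac{\big(\int(H-\lambda)d\mu\big)^2}{\int d\mu}\Big]\ \ge 0 .
\end{equation*}
This is a variance of $H-\lambda$, so it gives no contradiction by itself. The extra input must come from the pointwise Simons identity together with the estimate $|\nabla h|^2\ge\frac{3}{n+2}|\nabla H|^2$.

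To exploit these, I would multiply the expression for $H_{,i}$ and use the Codazzi equations to write $|\nabla H|^2=\frac14|h(x^T)|^2\le\frac14 S|x^T|^2$. I would then feed this, together with the $L|x|^2$ identity, into the variance equality.

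A second route runs the maximum principle instead. At a maximum point of $|x|^2$ we have $x^T=0$, so $x=\pm|x|\xi$ there and $\nabla H=0$ there; comparing $h\ge\frac{1}{|x|}$ with $S=$ const bounds $S$ from below by $n/R^2$. At a minimum point the inequality reverses. The two bounds together should force $S=H^2/n$, i.e. umbilicity, after which constancy of $H$ follows from Codazzi.

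\textbf{Main obstacle.} The $\lambda f_3$ term in Simons' identity has no sign. Controlling it, presumably via $|\lambda f_3|\le|\lambda|\sqrt{S}\,S$ and the relation between $\lambda$, $S$ and the sphere radius $r$ (where $n/r+r/2=\lambda$), is where I expect the real work to lie. I would first try the maximum-principle comparison at the extremal points of $|x|^2$, since there $\lambda$ is tied explicitly to $H$ and $|x|$.
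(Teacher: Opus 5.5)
\textbf{There is a genuine gap: your Step 3 never closes, and the routes you sketch do not work as stated.} Write bars for $\mu$-means.

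Plugging $|\nabla H|^2\le\frac14 S|x^\top|^2$ into your variance identity and the integrated $L|x|^2$ identity gives only
\[
\lambda^2S-n(S+\tfrac12)^2\le c\,\overline{|x^\top|^2}\qquad\text{for some } c>0 .
\]
That is an upper bound by a nonnegative quantity, so it forces nothing.

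The maximum-principle route fails at the minimum of $|x|^2$. There you only get $\delta_{ij}+h_{ij}\langle x,\xi\rangle\ge 0$, which bounds each principal curvature from one side only. It gives no upper bound on $S$, so the ``reversed'' inequality you want does not exist, and nothing forces $S=H^2/n$.

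The $\lambda f_3$ term you call the main obstacle is a red herring: Simons' identity is not needed at all.

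\textbf{The missing ingredient is the pointwise inequality $H^2\le nS$, averaged against $d\mu$.} Your first integral identity gives $\bar H=\lambda S/(S+\frac12)$. Hence $nS\ge\overline{H^2}\ge\bar H^2$ yields
\[
S\,\bigl[\,n(S+\tfrac12)^2-\lambda^2S\,\bigr]\ge 0 .
\]
Now take your integrated $L|x|^2$ identity, insert $\overline{H-\lambda}=-\lambda/(2S+1)$, and use $\langle x,\xi\rangle=2(H-\lambda)$. It reads
\[
\overline{|x^\top|^2}+4\,\overline{(H-\bar H)^2}=\frac{2\bigl(\lambda^2S-n(S+\frac12)^2\bigr)}{(S+\frac12)^2}.
\]
If $S=0$, the right side is $-2n<0$, which is impossible. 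If $S>0$, the previous inequality makes the right side $\le 0$, while the left side is $\ge 0$.

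So both sides vanish and $x^\top\equiv 0$. Then $\nabla|x|^2=2x^\top=0$, so $M$ is a sphere centered at the origin. You do not need the Ancari--Cheng constant-$H$ classification.

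This is exactly the paper's argument, which it phrases via $\overline{|x|^2}-\overline{\langle x,\xi\rangle}^2\ge 0$. You already had both integral identities in Step 2; you just never combined them with $H^2\le nS$.
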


\begin{theorem}\label{thm2}
Let $x:M^n\to \mathbb{R}^{n+1}$ be a closed immersed $\lambda$-self-expander. If the scalar curvature $$R\geq \frac{n-2}{n-1}H^2> 0,$$ then $R=\frac{n-1}{n}H^2$ and $M^n$ is a round sphere centered at the origin.
\end{theorem}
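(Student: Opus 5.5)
The plan is to turn the scalar curvature pinching into convexity, then into embeddedness, and finally to invoke the Alexandrov-type characterization of closed embedded $\lambda$-self-expanders due to Bai--Xia \cite{baixia2026alexandrov} and John--Silini \cite{johnsilini2026}. Once $M^n$ is a round sphere centered at the origin, the equality $R=\frac{n-1}{n}H^2$ is immediate, since all principal curvatures coincide.

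\textbf{Step 1: pointwise algebra.} Let $k_1,\dots,k_n$ be the principal curvatures, $H=\sum k_i$ and $S=\sum k_i^2$. The Gauss equation gives $R=H^2-S$, so the hypothesis is equivalent to
$$S\le \frac{1}{n-1}H^2,\qquad H\neq 0 .$$
I will use the standard Okumura-type lemma: if $\sum k_i^2\le \frac{1}{n-1}\big(\sum k_i\big)^2$, then every $k_i$ has the same sign as $H$ or vanishes. To prove it, fix $i$ and apply Cauchy--Schwarz to the other $n-1$ curvatures:
$$(H-k_i)^2\le (n-1)(S-k_i^2)\le H^2-(n-1)k_i^2 .$$
Expanding and rearranging gives $nk_i^2-2Hk_i\le 0$, that is, $k_i(nk_i-2H)\le 0$. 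Hence $k_i$ lies between $0$ and $2H/n$, which has the sign of $H$.

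\textbf{Step 2: global sign and convexity.} Since $H^2>0$ everywhere and $M$ is connected and closed, $H$ has a fixed sign. At a point where $|x|^2$ attains its maximum, all principal curvatures with respect to the inward normal $\xi$ are positive. Hence $H>0$, and by Step 1 every $k_i\ge 0$ with $H>0$. Thus $M$ is a closed, locally convex immersed hypersurface that is nowhere flat. By the Hadamard--Sacksteder theorem (Sacksteder's version allows weak convexity provided the hypersurface is not totally flat), $x$ is an embedding onto the boundary of a convex body.

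\textbf{Step 3: conclusion.} Since $M$ is a closed embedded $\lambda$-self-expander, the Alexandrov-type theorem of \cite{baixia2026alexandrov,johnsilini2026} implies that it is a round sphere centered at the origin. Consequently $R=\frac{n-1}{n}H^2$.

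\textbf{Main obstacle.} The delicate point is Step 2. When $S=\frac{1}{n-1}H^2$ at some points, one curvature may vanish, so strict convexity fails and Hadamard's theorem is not directly applicable. I will therefore rely on Sacksteder's theorem, or alternatively argue that the Gauss map is still a local diffeomorphism onto its image in a suitable sense.

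If one prefers to avoid the Alexandrov theorem, a fallback is an integral argument. It uses the Minkowski formulas
$$\int_M \big(n+H\langle x,\xi\rangle\big)=0,\qquad \int_M \big((n-1)H+R\langle x,\xi\rangle\big)=0,$$
with $\langle x,\xi\rangle=2(H-\lambda)$. These would be combined with the pinching $\frac{n-2}{n-1}H^2\le R\le \frac{n-1}{n}H^2$ to force equality in Newton's inequality. The uncertain part of this route is controlling the sign of $\lambda$ when combining the two identities.
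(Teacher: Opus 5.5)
Your proof is correct, but it follows a genuinely different route from the paper.

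\textbf{The paper's route.} The paper stays inside the weighted PDE framework. It uses $\int_M LH\,e^{|x|^2/4}dv=0$ together with $\lambda\ge\sqrt{2n}$ (Ancari--Cheng) to get $H>0$. It then combines the drift-Laplacian formula for $S/H^2$ with Okumura's inequality to obtain $L(S/H^2)+\frac{2}{H}\langle\nabla(S/H^2),\nabla H\rangle\ge 0$ under the pinching, and concludes umbilicity by the maximum principle.

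\textbf{Your route.} You first observe, by an elementary Cauchy--Schwarz argument, that $S\le H^2/(n-1)$ already makes the second fundamental form semidefinite. Sacksteder's theorem then gives an embedded convex hypersurface, and the Alexandrov-type theorem of Bai--Xia and John--Silini finishes the proof. The self-expander equation enters only through that citation.

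\textbf{Comparison.} Your argument is shorter and more general: it shows that every closed immersed $\lambda$-self-expander with nonnegative sectional curvature is a round sphere centered at the origin. It also sidesteps the equality case in the paper's maximum-principle step. There, once $S/H^2$ is known to be constant, one still has to rule out $|B|\equiv H/\sqrt{n(n-1)}$, i.e.\ principal curvatures $(\frac{H}{n-1},\dots,\frac{H}{n-1},0)$. The paper's write-up passes over this; it is excluded at the point farthest from the origin. In exchange, the paper's proof is self-contained: it relies only on Okumura's inequality and a maximum principle, not on Sacksteder's theorem and a very recent Alexandrov theorem.

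\textbf{Minor points.}
\begin{itemize}
\item The sign discussion in your Step 2 is not needed. Sacksteder applies to a semidefinite second fundamental form of either sign, and ``inward'' only becomes unambiguous once embeddedness is known.
\item The hypothesis Sacksteder actually needs is positive sectional curvature at some point, not ``nowhere flat''. Compactness supplies such a point: the point farthest from the origin.
\item The Minkowski-formula fallback can be dropped.
\end{itemize}
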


Using the Gauss equation,  $R=H^2-S$, then Theorem \ref{thm2} immediately yields the following result.
\begin{corollary}\label{co-1}
Let $x:M^n\to \mathbb{R}^{n+1}$ be a closed immersed $\lambda$-self-expander. If the mean curvature $H> 0$ and the squared norm of
the second fundamental form $S\leq \frac{H^2}{n-1}$,  then $M^n$ is a round sphere centered at the origin.
\end{corollary}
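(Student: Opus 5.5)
The plan is to reduce Corollary \ref{co-1} directly to Theorem \ref{thm2}. No new analysis is needed; the argument is purely algebraic.

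\emph{Step 1.} The paper normalizes the mean curvature as the trace $H=\sum_i h_{ii}$ (the sphere case gives $R=\frac{n-1}{n}H^2$). With this convention the Gauss equation reads $R=H^2-S$, where $S=\sum_{i,j}h_{ij}^2$ is the squared norm of the second fundamental form.

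\emph{Step 2.} Assume $S\le \frac{H^2}{n-1}$ and $H>0$. Then
\[
R=H^2-S\ \ge\ H^2-\frac{H^2}{n-1}=\frac{n-2}{n-1}H^2 .
\]
This is the first half of the hypothesis of Theorem \ref{thm2}.

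\emph{Step 3.} For the strict inequality $\frac{n-2}{n-1}H^2>0$, note that $H>0$ gives $H^2>0$. So the inequality holds as soon as $n\ge 3$, and all hypotheses of Theorem \ref{thm2} are satisfied. Theorem \ref{thm2} then says that $M^n$ is a round sphere centered at the origin.

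\emph{Step 4: the case $n=2$.} This is the only real subtlety, since there $\frac{n-2}{n-1}H^2=0$ and the hypothesis $R\ge \frac{n-2}{n-1}H^2>0$ literally fails. I would not rerun the proof of Theorem \ref{thm2}. Instead I would observe that for $n=2$ the assumption forces umbilicity by a pointwise inequality. The Cauchy--Schwarz inequality gives $S\ge \frac{H^2}{n}=\frac{H^2}{2}$, while the hypothesis gives $S\le \frac{H^2}{n-1}=H^2$. These two bounds alone do not pin down $S$, so more is needed.

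The cleaner route for $n=2$ is to check that the proof of Theorem \ref{thm2} only uses $R\ge\frac{n-2}{n-1}H^2$ together with $H>0$, and not the strict positivity of $\frac{n-2}{n-1}H^2$. If so, the corollary follows in all dimensions. If the proof genuinely needs $n\ge3$, then the corollary is stated for the range of $n$ in which Theorem \ref{thm2} is stated.

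I expect this dimension bookkeeping to be the only obstacle. Everything else is the one-line Gauss-equation computation in Step 2.
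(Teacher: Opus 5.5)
Your Steps 1--3 are exactly the paper's argument: the paper obtains the corollary in one line from $R=H^2-S$ and Theorem~\ref{thm2}. You are also right that this reduction fails literally for $n=2$. There $\frac{n-2}{n-1}H^2=0$, so the hypothesis of Theorem~\ref{thm2} can never be satisfied. Yet the corollary's hypothesis is meaningful for $n=2$: $S\le H^2$ means Gauss curvature $K\ge 0$. The paper's ``immediately yields'' passes over this silently. However, your Step~4 only says what you would check. Your fallback of restricting to $n\ge 3$ would prove a weaker statement. So, as written, the case $n=2$ is a gap.

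The check you propose does succeed.
\begin{itemize}
\item In the proof of Theorem~\ref{thm2}, the strict inequality is used only to get $H>0$. From (\ref{th2-1}) onward the proof uses only $H>0$, $S\le\frac{H^2}{n-1}$ and $\lambda\ge\sqrt{2n}>0$.
\item The only input restricted to $n\ge 3$ is Okumura's inequality, and it is trivial for $n=2$. A traceless symmetric $2\times 2$ matrix $B$ has eigenvalues $\pm b$, so $\mathrm{tr}(B^3)=0$. Hence $H\,\mathrm{tr}(h^3)-S^2=|B|^2\big(\frac{H^2}{2}-|B|^2\big)\ge 0$, because $S\le H^2$ is equivalent to $|B|^2\le\frac{H^2}{2}$.
\item Lemma~\ref{le-4-2} then gives $LQ+\frac{2}{H}\langle\nabla Q,\nabla H\rangle\ge 0$ for $Q=S/H^2$.
\item When you rerun the argument, keep the gradient term inside the operator and apply the strong maximum principle. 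The paper's ``$L(S/H^2)\ge 0$'' and its ``contradiction'' at the maximum point are not literally correct. The strong maximum principle gives that $Q$ is constant and that both nonnegative terms in (\ref{eq2-2}) vanish.
\item Since $|B|^2=(Q-\frac12)H^2$ with $H>0$, either $B\equiv 0$ or $|B|^2\equiv\frac{H^2}{2}$, i.e.\ $K\equiv 0$. The latter is impossible on a closed surface in $\mathbb{R}^3$, because $K>0$ at a point farthest from the origin.
\item Hence $M$ is totally umbilic, so it is a round sphere, and $H-\frac12\langle x,\xi\rangle=\lambda$ forces its center to be the origin.
\end{itemize}
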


The organization of this paper is as follows.  In section 2, we  recall some basic notation about $\lambda$-self-expanders. In section 3,   we prove Theorem \ref{thm1} and Theorem \ref{thm2}.

\
\section{Preliminaries}
In this section, we give the necessary facts about  the $\lambda$-self expanders in $\mathbb{R}^{n+1}$.
Let $x: M^n\to \mathbb{R}^{n+1}$ be an $n$-dimensional immersed  hypersurface in the  Euclidean space $\mathbb{R}^{n+1}$. Let $\{e_1,\cdots,e_n\}$ be a local orthonormal frame field of $x(M^n)$ with respect to the first fundamental form $I=\langle dx,dx\rangle$ with dual coframe field $\{\omega_1,\cdots,\omega_n\}$. Let $\xi$ be the unit normal vector field of $x$.
The tangential component of the position vector is
$ x^\top=\sum_{i=1}^n \langle x,e_i\rangle\, e_i, $
and the normal component is
$ x^\perp = \langle x,\xi\rangle\,\xi,$
so that we have
\begin{equation}\label{po-1}
x=x^{\top}+x^\perp, ~~|x|^2=|x^{\top}|^2+\langle x,\xi\rangle^2.
\end{equation}

In this paper we use the following conventions on the ranges of indices:
$$ 1\leq i,j,k,l\leq n.$$
Thus we have the structure equation of $x$,
\begin{equation}\label{eq-str}
\begin{split}
&dx=\sum_{i} \omega_{i}e_{i},\\
&de_{i}=\sum_{j} \omega_{ij}e_{j}+\sum_jh_{ij}\omega_j\xi,\\
&d\xi=-\sum_{i,j}h_{ij}\omega_je_{i},
\end{split}
\end{equation}
where $h_{ij}$ denote the components of the second fundamental form of $x$
 and $\omega_{ij}$ are the Levi-Civita connection forms of $I$ with respect to $\{\omega_1,\cdots,\omega_n\}$.

Let
$$ h=\sum_{i,j} h_{ij}\omega_{i}\otimes \omega_{j} $$
be the second fundamental form, then the squared norm of the second fundamental form $S=|h|^2=tr(h^2)=\sum_{i,j}(h_{ij})^{2}, $  and the mean curvature $H=tr(h)=\sum_{i} h_{ii}.$
The traceless second fundamental form is
$$B=h-\frac{H}{n}I.$$
Then
\begin{equation}\label{eq-0}
\begin{split}
&tr(B)=0, ~~S=|B|^2+\frac{H^2}{n},\\
&tr(h^3)=tr(B^3)+\frac{3H}{n}|B|^2+\frac{H^3}{n^2}.
\end{split}
\end{equation}

The Gauss equations and the Codazzi equations of the hypersurface $x$ are given by
\begin{equation}\label{eq-1}
\begin{split}
&R_{ijkl}=h_{ik}h_{jl}-h_{il}h_{jk},~~1\leq i,j,k,l\leq n,\\
&h_{ij,k}=h_{ik,j}, ~~1\leq i,j,k\leq n,
\end{split}
\end{equation}
where the covariant derivative of $h_{ij}$ is defined by
$$\sum_kh_{ij,k}\omega_k=dh_{ij}+\sum_kh_{ik}\omega_{kj}+\sum_kh_{kj}\omega_{ki}.$$

The second covariant derivative of the second fundamental form is defined by
$$\sum_mh_{ij,km}w_{m}=dh_{ij,k}+\sum_mh_{mj,k}w_{mi}+\sum_mh_{im,k}w_{mj}+\sum_mh_{ij,m}\omega_{mk}.$$
Thus we have the following Ricci identity
\begin{equation}\label{ricd}
h_{ij,kl}-h_{ij,lk}=\sum_mh_{mj}R_{mikl}+\sum_mh_{im}R_{mjkl}.
\end{equation}

By the Gauss equation (\ref{eq-1}), we obtain the Ricci curvature $R_{ij}$ and the scalar curvature $R$ of the hypersurface,
\begin{equation}\label{ricc}
\begin{split}
&R_{ij}=Hh_{ij}-\sum_{m}h_{im}h_{mj},\\
&R=H^2-S,
\end{split}
\end{equation}

Let $f$ be a smooth function on the hypersurface $x(M^n)$. Define the weighted mean curvature $H_f$ of $x$ of weight $e^{-f}$ by
$$H_f:=H-\langle \nabla f, \xi\rangle.$$
The hypersurface $x$ is called constant weighted mean curvature hypersurface (of weight $e^{-f}$) if $H_f=\lambda$ for some constant $\lambda$, i.e.,
$$H=\langle \nabla f, \xi\rangle +\lambda.$$
When $\lambda=0$, the hypersurface $x$ is called $f$-minimal hypersurface. As the models of singularity for the mean curvature flow, self-shrinkers, self-expanders and translating solitons are three special types of $f$-minimal hypersurface, whose weight function $f$ is corresponding to $\frac{|x|^2}{4}, \frac{-|x|^2}{4}$ and $-\langle x, a\rangle$ respectively, where $a\in \mathbb{R}^{n+1}$ is a constant vector.

The constant weighted mean curvature hypersurface with weight $\frac{-|x|^2}{4}$ is called a $\lambda$-self-expander. Thus an immersed hypersurface $x: M^n \to \mathbb{R}^{n+1}$ is called a $\lambda$-self-expander if it satisfies
\begin{equation}\label{def-2}
    H - \frac12 \langle x,\xi \rangle = \lambda,
\end{equation}
where $\xi$ is  the inward unit normal vector field of $x$.

Differentiating  (\ref{def-2}), we have the following equation,
\begin{equation}\label{self-1}
\begin{split}
&H_i=\frac{-1}{2}\sum_kh_{ik}\langle x,e_k\rangle,~~1\leq i\leq n,\\
&H_{ij}=\frac{-1}{2}\sum_kh_{ik,j}\langle x,e_k\rangle-\frac{1}{2}h_{ij}-\sum_kh_{ik}h_{kj}(H-\lambda).
\end{split}
\end{equation}

On $\lambda$-self-expander, the corresponding drifted Laplacian is
$$L = \Delta +\frac{1}{2}\nabla_{x^T}.$$
where \(\Delta\) denotes the Laplacian operator and \(x^T\) is the tangential component  of the position vector \(x\). The second order  elliptic operator $L$ is a self-adjoint operator in $L^2(M^n, e^{\frac{|x|^2}{4}}dv)$, for $f,g\in C^{\infty}(M^n)$,
$$\int_{M^n}f L ge^{\frac{|x|^2}{4}}dv=\int_{M^n}g L fe^{\frac{|x|^2}{4}}dv.$$
Moreover,
$$\int_{M^n}L fe^{\frac{|x|^2}{4}}dv=0.$$
The following equation on the $\lambda$-self-expander is key in the proof of our results.
\begin{lemma}\label{le-4-1}
Let $x:M^n\to \mathbb{R}^{n+1}$ be a $\lambda$-self-expander, then we have
\begin{equation}\label{eq2-1}
\begin{split}
&LH=S(\lambda-H)-\frac{1}{2}H,\\
&LS=2|\nabla h|^2-S-2S^2+2\lambda Str(h^3),\\
&L|x|^2=|x|^2+2\lambda\langle x,\xi\rangle+2n.
\end{split}
\end{equation}
\end{lemma}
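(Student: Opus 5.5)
The plan is to verify all three identities by direct local computation in an orthonormal frame, using the structure equations (\ref{eq-str}), the Codazzi equation and Ricci identity (\ref{eq-1}), (\ref{ricd}), and the differentiated self-expander equation (\ref{self-1}). The one structural fact used throughout is that, for any function $u$, $L u=\Delta u+\frac12\sum_k u_k\langle x,e_k\rangle$. So in each case the goal is to show that the drift term produced by $\Delta$ is exactly $-\frac12\langle \nabla u, x^\top\rangle$, which $L$ cancels.

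\emph{Derivation of (\ref{self-1}).} Write $H=\lambda+\frac12\langle x,\xi\rangle$ and use $d\xi=-\sum h_{ij}\omega_j e_i$, which gives $H_i=-\frac12\sum_k h_{ik}\langle x,e_k\rangle$. From $de_k$ in (\ref{eq-str}), the covariant derivative of $\langle x,e_k\rangle$ in direction $e_j$ is $\delta_{kj}+h_{kj}\langle x,\xi\rangle=\delta_{kj}+2(H-\lambda)h_{kj}$. Differentiating $H_i$ once more then yields the stated formula for $H_{ij}$.

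\emph{First identity.} Take the trace $i=j$ in (\ref{self-1}). By Codazzi, $\sum_i h_{ik,i}=H_k$, so
$$\Delta H=-\tfrac12\langle\nabla H,x^\top\rangle-\tfrac12 H+S(\lambda-H).$$
Adding $\frac12\langle x^\top,\nabla H\rangle$ gives $LH=S(\lambda-H)-\frac12H$.

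\emph{Third identity.} We have $\nabla|x|^2=2x^\top$, and $\Delta|x|^2=2n+2\langle x,\Delta x\rangle=2n+2H\langle x,\xi\rangle$, since $\Delta x=H\xi$. Hence $L|x|^2=2n+2H\langle x,\xi\rangle+|x^\top|^2$. Substituting $H=\lambda+\frac12\langle x,\xi\rangle$ and using (\ref{po-1}) gives $|x|^2+2\lambda\langle x,\xi\rangle+2n$.

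\emph{Second identity.} This is the Simons-type computation and is where the work lies. Start from
$$\Delta S=2|\nabla h|^2+2\sum h_{ij}h_{ij,kk}.$$
Using Codazzi twice and the Ricci identity (\ref{ricd}) with the Gauss equation, the standard commutation gives
$$\sum h_{ij}h_{ij,kk}=\sum h_{ij}H_{ij}+H\,tr(h^3)-S^2.$$
Next insert (\ref{self-1}). Since $h_{ik,j}=h_{ij,k}$, the first term becomes $-\frac12\sum h_{ij}h_{ij,k}\langle x,e_k\rangle=-\frac14\langle\nabla S,x^\top\rangle$, which is exactly absorbed by the drift in $L$. The remaining terms are $-\frac12 S-(H-\lambda)tr(h^3)$, and the $H\,tr(h^3)$ contributions cancel. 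This leaves
$$LS=2|\nabla h|^2-S-2S^2+2\lambda\, tr(h^3).$$

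The main obstacle is bookkeeping of signs and conventions, namely the inward normal, the sign in $d\xi$, and the sign in the Ricci identity. These must agree with (\ref{ricc}) so that the curvature terms come out as $H\,tr(h^3)-S^2$. I would double-check the result on the sphere centered at the origin, where everything is explicit.

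One caution about the statement: the computation produces the cubic term $2\lambda\,tr(h^3)$, with no factor $S$. The term written as $2\lambda S tr(h^3)$ in (\ref{eq2-1}) should be read as $2\lambda\,tr(h^3)$. A homogeneity check confirms this, since $LS$ has curvature order four while $S\,tr(h^3)$ would have order five.
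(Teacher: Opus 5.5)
Your proposal is correct and is essentially the paper's proof: you trace (\ref{self-1}) with Codazzi to get $LH$, substitute (\ref{self-1}) into Simons' identity and contract with $h_{ij}$ to get $LS$, and use $\Delta x=H\xi$ with (\ref{po-1}) to get $L|x|^2$; you also derive (\ref{self-1}), which the paper only states. Your correction of the cubic term to $2\lambda\, tr(h^3)$ is right, since it is exactly what the paper's own displayed formula for $\Delta h_{ij}$ yields and what Lemma \ref{le-4-2} requires, but confirm it with the origin-centered sphere rather than a homogeneity count, because $L$ is not scale-homogeneous (witness the $-S$ term); on the sphere, $S=n/r^2$, $tr(h^3)=n/r^3$ and $\lambda=n/r+r/2$ give $-S-2S^2+2\lambda\, tr(h^3)=0$ as required.
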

\begin{proof}
From (\ref{self-1}),
$$H_{ij}=\frac{-1}{2}\sum_kh_{ik,j}\langle x,e_k\rangle-\frac{1}{2}h_{ij}-\sum_kh_{ik}h_{kj}(H-\lambda).$$
Taking the trace and using the Codazzi equation,
$$\sum_ih_{ij,i}=H_j,$$
we obtain
$$\triangle H=-\frac{1}{2}\langle x^{\top},\nabla H\rangle-\frac{1}{2}H-(H-\lambda)S.$$
Adding the drift term gives the first equation in (\ref{eq2-1}).

From (\ref{eq-1}) and (\ref{ricd}), we can obtain the Simons identity
$$\Delta h_{ij}=\sum_kh_{ij,kk}=H_{ij}+H\sum_kh_{ik}h_{kj}-Sh_{ij}.$$
Substituting the formula for $H_{ij}$, we obtain
$$\Delta h_{ij}=-\frac{1}{2}\sum_kh_{ik,j}\langle x, e_k\rangle-(S+\frac{1}{2})h_{ij}+\lambda\sum_kh_{ik}h_{kj}.$$
Adding the drift term gives the second equation in (\ref{eq2-1}).

From (\ref{eq-str}), $$\Delta x=H \xi.$$ Thus
$$\Delta |x|^2=2H\langle x,\xi\rangle+2n.$$
Since $\nabla|x|^2=2x^{\top}$, then
$$\frac{1}{2}\langle x^{\top}, \nabla |x|^2\rangle=|x^{\top}|^2.$$
Using (\ref{def}) and (\ref{po-1}), we obtain the third equation in (\ref{eq2-1}).
\end{proof}
\begin{lemma}\label{le-4-2}
Let $x:M^n\to \mathbb{R}^{n+1}$ be a $\lambda$-self-expander. If $H>0$, then we have
\begin{equation}\label{eq2-2}
\begin{split}
L\Big(\frac{S}{H^2}\Big)&=-\frac{2}{H}\langle \nabla\Big(\frac{S}{H^2}\Big), \nabla H\rangle+\frac{2}{H^4}\sum_{ij,k}|Hh_{ij,k}-H_kh_{ij}|^2\\
&+\frac{2\lambda}{H^3}\Big[Htr(h^3)-S^2\Big].
\end{split}
\end{equation}
\end{lemma}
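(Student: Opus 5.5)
Since $H>0$, $S/H^2$ is smooth, and the drifted Laplacian obeys the usual quotient rule. The plan is to expand $L(S H^{-2})$ by the product rule, insert the two formulas of Lemma \ref{le-4-1}, and then regroup the first-order terms into the perfect square $\sum|Hh_{ij,k}-H_kh_{ij}|^2$ plus a gradient term. Throughout we write $f=S$ and $g=H^{-2}$. The drift part $\frac12\nabla_{x^T}$ is a derivation, so
$$L(fg)=gLf+fLg+2\langle\nabla f,\nabla g\rangle .$$

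\textbf{Step 1.} First compute $L(H^{-2})$. We have $\nabla H^{-2}=-2H^{-3}\nabla H$ and $\Delta H^{-2}=-2H^{-3}\Delta H+6H^{-4}|\nabla H|^2$. Hence
$$L(H^{-2})=-2H^{-3}LH+6H^{-4}|\nabla H|^2 .$$
Using $LH=S(\lambda-H)-\frac12 H$ from Lemma \ref{le-4-1}, this becomes
$$L(H^{-2})=-2\lambda SH^{-3}+2SH^{-2}+H^{-2}+6H^{-4}|\nabla H|^2 .$$

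\textbf{Step 2.} Multiply by $S$ and add $H^{-2}LS$, where
$$LS=2|\nabla h|^2-S-2S^2+2\lambda S\,tr(h^3).$$
The zeroth-order terms cancel as follows. The pair $2S^2H^{-2}$ and $-2S^2H^{-2}$ cancels. The pair $SH^{-2}$ and $-SH^{-2}$ cancels. The $\lambda$-terms combine to $\frac{2\lambda}{H^3}\big[H\,tr(h^3)-S^2\big]$. What remains is
$$\frac{2|\nabla h|^2}{H^2}+\frac{6S|\nabla H|^2}{H^4}-\frac{4}{H^3}\langle\nabla S,\nabla H\rangle .$$
The last term is the cross term $2\langle\nabla S,\nabla H^{-2}\rangle$.

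\textbf{Step 3.} The main obstacle is bookkeeping: the remaining first-order terms must be rewritten in the claimed form. Expand the square:
$$\frac{2}{H^4}\sum|Hh_{ij,k}-H_kh_{ij}|^2=\frac{2|\nabla h|^2}{H^2}-\frac{2\langle\nabla S,\nabla H\rangle}{H^3}+\frac{2S|\nabla H|^2}{H^4},$$
using $\sum h_{ij}h_{ij,k}=\frac12 S_k$. Next compute
$$-\frac{2}{H}\Big\langle\nabla\frac{S}{H^2},\nabla H\Big\rangle=-\frac{2\langle\nabla S,\nabla H\rangle}{H^3}+\frac{4S|\nabla H|^2}{H^4}.$$
Adding these two expressions reproduces exactly $\frac{2|\nabla h|^2}{H^2}-\frac{4\langle\nabla S,\nabla H\rangle}{H^3}+\frac{6S|\nabla H|^2}{H^4}$. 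This is the remainder from Step 2, which proves \eqref{eq2-2}. The only care needed is to keep the coefficients $6$, $4$ and $2$ consistent and to get the sign of the $\lambda$-term right.
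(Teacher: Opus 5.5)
Your route is essentially the paper's. The paper applies the quotient rule with $g=H^2$ to Lemma~\ref{le-4-1}, and you use the product rule with $H^{-2}$. Your first-order bookkeeping in Steps~1 and~3 is correct. In fact your $L(H^{-2})$ is right, while the paper's displayed $LH^2$ should read $-(2S+1)H^2+2\lambda SH+2|\nabla H|^2$.

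The problem is Step~2. You quote $LS=2|\nabla h|^2-S-2S^2+2\lambda S\,tr(h^3)$. With that formula the $\lambda$-terms are
\[
\frac{2\lambda S\,tr(h^3)}{H^2}-\frac{2\lambda S^2}{H^3}=\frac{2\lambda S}{H^3}\big[H\,tr(h^3)-S\big],
\]
not $\frac{2\lambda}{H^3}\big[H\,tr(h^3)-S^2\big]$. So your claimed combination does not follow from your own input. Your closing remark about getting the sign of the $\lambda$-term right misses the actual issue, which is the extra factor of $S$.

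The fix is that the printed formula for $LS$ is itself a misprint: the last term should be $2\lambda\,tr(h^3)$. The computation in the proof of Lemma~\ref{le-4-1} gives $Lh_{ij}=-(S+\frac12)h_{ij}+\lambda\sum_k h_{ik}h_{kj}$. Hence
\[
LS=2\sum_{i,j}h_{ij}Lh_{ij}+2|\nabla h|^2=2|\nabla h|^2-S-2S^2+2\lambda\,tr(h^3).
\]
A quick check confirms this. On the sphere of radius $r$ about the origin, $\lambda=\frac nr+\frac r2$ and $S$ is constant, so $LS=0$. This holds with $2\lambda\,tr(h^3)$ but fails with $2\lambda S\,tr(h^3)$; for example, $n=2$, $r=2$ gives $-\frac12$.

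With the corrected term, $\frac{2\lambda\,tr(h^3)}{H^2}-\frac{2\lambda S^2}{H^3}=\frac{2\lambda}{H^3}\big[H\,tr(h^3)-S^2\big]$, and the rest of your argument goes through unchanged. You should therefore derive $LS$ (or at least $Lh_{ij}$) yourself rather than cite the printed version.
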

\begin{proof}
If $f,g\in C^{\infty}(M^n)$ and $g>0$, then
\begin{equation}\label{le2-1}
L\Big(\frac{f}{g}\Big)=\frac{gL f-fL g}{g^2}-\frac{2}{g}\langle \nabla\Big(\frac{f}{g}\Big), \nabla g\rangle.
\end{equation}
From (\ref{eq2-1}),
$$L H^2=-(S+\frac{1}{2})H^2+2\lambda SH+2|\nabla H|^2.$$
Combining the second formula in (\ref{eq2-1}) and (\ref{le2-1}) proves (\ref{eq2-2}).
\end{proof}

It is well known that there exist no closed self-expanders in $\mathbb{R}^{n+1}$. But the situation is different for $\lambda\neq 0$. In \cite{AncariCheng},
S. Ancari and X. Cheng proved the following results,
\begin{theorem}\cite{AncariCheng}\label{le2-2}
Let $x:M^n\to R^{n+1}$ be a smooth immersed closed $\lambda$-self-expander. Then,\\
(i)~ $\lambda\geq \sqrt{2n}$. Moreover, the sphere $\mathbb{S}^n(\sqrt{2n})$ is the only closed $\lambda$-self-expander with $\lambda=\sqrt{2n}$.\\
(ii)~ If $$S\leq -\frac{1}{2}+\frac{\lambda(\lambda-\sqrt{\lambda^2-2n})}{2n},$$ then $x$ is a sphere $\mathbb{S}^n(r)$ with $r=\lambda+\sqrt{\lambda^2-2n}$.
\end{theorem}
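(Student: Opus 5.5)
For (i), I would integrate the third identity of Lemma \ref{le-4-1} against the weight $e^{\frac{|x|^2}{4}}dv$. Since $\int_{M^n} L(|x|^2)e^{\frac{|x|^2}{4}}dv=0$ on a closed $M^n$, and $|x|^2+2\lambda\langle x,\xi\rangle=|x+\lambda\xi|^2-\lambda^2$, this gives
$$\int_{M^n}\big(|x+\lambda\xi|^2+2n-\lambda^2\big)e^{\frac{|x|^2}{4}}dv=0 .$$
Hence $\lambda^2\ge 2n$, with equality iff $x=-\lambda\xi$ everywhere.

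To fix the sign of $\lambda$, I would look at a point where $|x|$ attains its maximum. There $x^\top=0$, the inward normal is $\xi=-x/|x|$, and by comparison with the enclosing sphere $H\ge n/|x|>0$. Then (\ref{def}) gives $\lambda=H+\frac12|x|>0$, so $\lambda\ge\sqrt{2n}$.

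In the equality case $x=-\lambda\xi$, so $|x|\equiv\lambda$ and $M^n$ is the sphere of radius $\sqrt{2n}$ centered at the origin. Conversely, this sphere has $H=n/r$ and $\langle x,\xi\rangle=-r$, so $\frac nr+\frac r2=\sqrt{2n}$ at $r=\sqrt{2n}$, and it is a $\lambda$-self-expander with $\lambda=\sqrt{2n}$.

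For (ii), write $u=-\langle x,\xi\rangle$, so $H=\lambda-\frac u2$. Write $\int$ for integration against $e^{\frac{|x|^2}{4}}dv$ and $V=\int 1$. The computation will rely on two integral identities.
\begin{itemize}
\item Integrating the first identity of Lemma \ref{le-4-1} gives $\int\big(S(\lambda-H)-\frac H2\big)=0$. Substituting $\lambda-H=\frac u2$ and $H=\lambda-\frac u2$, this becomes $\int (S+\tfrac12)u=\lambda V$.
\item Integrating the third identity gives $2\lambda\int u=\int |x|^2+2nV\ge \int u^2+2nV$, with equality iff $x^\top\equiv 0$.
\end{itemize}

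Let $r=\lambda+\sqrt{\lambda^2-2n}$. Since $(\lambda-\sqrt{\lambda^2-2n})(\lambda+\sqrt{\lambda^2-2n})=2n$, the hypothesis reads $A:=S+\frac12\le \lambda/r$, and also $A\ge \frac12>0$.
\begin{itemize}
\item Cauchy--Schwarz in the first identity gives $\lambda^2V^2\le \int A^2\int u^2\le \frac{\lambda^2}{r^2}V\int u^2$. With $t=\big(\int u^2/V\big)^{1/2}$ this says $t\ge r$.
\item Cauchy--Schwarz in the second identity gives $t^2+2n\le 2\lambda t$. So $t$ lies between the roots $\lambda\pm\sqrt{\lambda^2-2n}$, hence $t\le r$.
\end{itemize}

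Therefore $t=r$ and every inequality above is an equality. In particular $x^\top\equiv0$, so $|x|$ is constant and $M^n$ is a round sphere centered at the origin. Its radius $\rho$ satisfies $\frac n\rho+\frac\rho2=\lambda$. Moreover $\rho=|u|$ has $\int u^2=\rho^2V$, so $\rho=t=r=\lambda+\sqrt{\lambda^2-2n}$.

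The main obstacle is that $u$ has no a priori sign, so pointwise bounds cannot be integrated directly. The plan sidesteps this by using Cauchy--Schwarz, which only needs $A\ge0$. The step needing the most care is matching the two quadratic constraints on $t$ so that they pinch exactly at the larger root $r$.
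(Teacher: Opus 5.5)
Your proof is correct. The paper does not actually prove this statement. It quotes it from Ancari--Cheng and uses it only as an input ($\lambda\ge\sqrt{2n}>0$) in the proof of Theorem \ref{thm2}, so there is no in-paper argument to match; what you give is a self-contained derivation from Lemma \ref{le-4-1}.

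Your part (ii) uses the same mechanism as the paper's proof of Theorem \ref{thm1}. Both integrate $LH$ and $L|x|^2$ against $e^{\frac{|x|^2}{4}}dv$, squeeze with a Cauchy--Schwarz inequality and with $|x|^2\ge\langle x,\xi\rangle^2$, and read off $x^\top\equiv 0$ from the equality case. The difference is in how the squeeze is closed:
\begin{itemize}
\item The paper has $S$ constant, so it pulls $S$ out of the integrals and closes the squeeze with $nS\ge H^2$.
\item You keep the nonconstant weight $A=S+\frac12$ inside $\int Au=\lambda V$ and close it with the pinching $0<A\le\lambda/r$. This produces two quadratic constraints on $t$ that meet exactly at the larger root $r$. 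That is consistent with the fact that $\mathbb{S}^n(r)$ attains the hypothesis with equality, since $n/r^2=-\frac12+\lambda/r$.
\end{itemize}

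Two minor remarks.

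\emph{The word ``inward''.} For an immersed hypersurface ``inward'' has no intrinsic meaning, because the defining equation is invariant under $(\xi,\lambda)\mapsto(-\xi,-\lambda)$. You should therefore present the choice $\xi=-x/|x|$ at a farthest point as the normalization that gives (i) its meaning. With that normalization, the farthest-point computation alone already gives $\lambda=H(p)+\frac{R_0}{2}\ge\frac{n}{R_0}+\frac{R_0}{2}\ge\sqrt{2n}$ by AM--GM. The weighted integral identity is then only needed for the global rigidity in the equality case $\lambda=\sqrt{2n}$.

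\emph{The case $n=1$.} Here ``is a sphere'' must be read up to a covering, since $|x|\equiv\mathrm{const}$ only forces the immersion to cover a circle. The same caveat applies to the paper's own theorems.
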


\section{Proof of Main theorems}

{\bf Proof of Theorem \ref{thm1}}.  For $f\in C^{\infty}(M^n)$, define the weighted mean of the function,
$$\bar{f}=\frac{\int_{M^n}fe^{\frac{|x|^2}{4}}dv}{\int_{M^n}e^{\frac{|x|^2}{4}}dv}.$$
Using the linearity of the weighted mean and the weighted Cauchy-Schwarz inequality, we obtain
\begin{equation}\label{wm-1}
\overline{f^2}-(\bar{f})^2=\overline{(f-\bar{f})^2}\geq 0.
\end{equation}

Since $S$ is constant, the first equation in (\ref{eq2-1}) gives
\begin{equation}\label{wm-2}
\overline{H}=\frac{\lambda S}{S+\frac{1}{2}}.
\end{equation}
Combining (\ref{wm-1}) and  $nS\geq H^2$,
$$nS\geq \overline{H^2}\geq (\overline{H})^2=\Big(\frac{\lambda S}{S+\frac{1}{2}}\Big)^2.$$
Thus
\begin{equation}\label{wm-3}
n\Big(S+\frac{1}{2}\Big)^2\geq\lambda^2S.
\end{equation}

Combining (\ref{def}) and (\ref{wm-2}),
\begin{equation}\label{wm-4}
\overline{\langle x,\xi\rangle}=2(\bar{H}-\lambda)=\frac{-\lambda }{S+\frac{1}{2}}.
\end{equation}
Combining the third equation in (\ref{eq2-1}),
\begin{equation}\label{wm-5}
\overline{|x|^2}=\frac{2\lambda^2}{S+\frac{1}{2}}-2n.
\end{equation}
Since $|x|^2=|x^{\top}|^2+\langle x,\xi\rangle^2$, thus
$$\overline{|x|^2}-\Big(\overline{\langle x,\xi\rangle}\Big)^2=\overline{|x^{\top}|^2}
+\overline{\Big(\langle x,\xi\rangle-\overline{\langle x,\xi\rangle}\Big)^2}\geq 0.$$
Combining (\ref{wm-4}) and (\ref{wm-5}), we obtain
$$\overline{|x|^2}-\Big(\overline{\langle x,\xi\rangle}\Big)^2=\frac{2\Big(\lambda^2S-n(S+\frac{1}{2})^2\Big)}{(S+\frac{1}{2})^2}.$$
Thus
$$\lambda^2S\geq n(S+\frac{1}{2})^2.$$
Combining (\ref{wm-3}), we have
$$\lambda^2S=n(S+\frac{1}{2})^2.$$
Hence
$$x^{\top}\equiv 0,~~\langle x,\xi\rangle\equiv \overline{\langle x,\xi\rangle}=constant.$$
Thus
$$|x|^2=|\overline{\langle x,\xi\rangle}|^2=constant.$$
So $M^n$ is a sphere centered at the origin and Theorem \ref{thm1} is proved.

{\bf Proof of Theorem \ref{thm2}}. The proof needs the following Okumura's inequality,
\begin{lemma}\cite{okumura1974hypersurfaces}
Let $\lambda_1, \lambda_2, \cdots, \lambda_n, n\geq 3$, be real numbers such that $\sum_i\lambda_i=0$ and $\sum_i\lambda_i^2=\mu^2$. Then
$$\frac{-(n-2)\mu^3}{\sqrt{n(n-1)}}\leq \sum_i\lambda_i^3\leq \frac{(n-2)\mu^3}{\sqrt{n(n-1)}},$$
and equality holds if and only if at least $(n-1)$ of $\lambda_i$ are equal.
\end{lemma}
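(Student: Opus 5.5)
The plan is the classical Lagrange multiplier argument on the constraint set. If $\mu=0$ then all $\lambda_i=0$, the statement is trivial, and equality holds with all $\lambda_i$ equal. So assume $\mu>0$. By homogeneity, replacing $\lambda_i$ by $\lambda_i/\mu$, it suffices to treat $\mu=1$. Consider $F(\lambda)=\sum_i\lambda_i^3$ on the set
$$K=\Big\{\lambda\in\mathbb{R}^n:\ \sum_i\lambda_i=0,\ \sum_i\lambda_i^2=1\Big\}.$$
This is a round $(n-2)$-sphere inside the hyperplane, hence compact. So $F$ attains its maximum and minimum on $K$, and both extremes occur at constrained critical points.

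\textbf{Structure of the critical points.} I will show that every critical point takes at most two distinct values. The constraint gradients $(1,\dots,1)$ and $2\lambda$ are linearly independent on $K$, since $\lambda$ is not proportional to $(1,\dots,1)$ when $\sum_i\lambda_i=0$ and $\lambda\neq0$. Hence the Lagrange condition applies and gives constants $a,b$ with
$$3\lambda_i^2=a+2b\lambda_i\quad\text{for all } i.$$
Each $\lambda_i$ is therefore a root of one fixed quadratic. Since $\sum_i\lambda_i=0$ and $\lambda\neq0$, the values cannot all coincide. So exactly two values occur: $p>0$ taken $k$ times and $q<0$ taken $n-k$ times, with $1\le k\le n-1$. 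Solving $kp+(n-k)q=0$ and $kp^2+(n-k)q^2=1$ gives
$$p=\sqrt{\frac{n-k}{nk}},\qquad q=-\sqrt{\frac{k}{n(n-k)}}.$$
Substituting, the value of $F$ at such a point is
$$kp^3+(n-k)q^3=\frac{n-2k}{\sqrt{n\,k(n-k)}}.$$

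\textbf{Comparing the candidates.} Set $g(t)=(n-2t)/\sqrt{t(n-t)}$ for $t\in(0,n)$. The main computational step is to check that $g$ is strictly decreasing. One way is to note $g(n-t)=-g(t)$ and compute $g'(t)$ directly; the numerator of $g'$ reduces to $-n^2/2<0$. It follows that among $k=1,\dots,n-1$ the maximum of $g(k)$ is at $k=1$, where
$$g(1)=\frac{n-2}{\sqrt{n-1}},$$
and the minimum is at $k=n-1$, where $g(n-1)=-(n-2)/\sqrt{n-1}$. Dividing by $\sqrt n$ and restoring the factor $\mu^3$ gives both inequalities.

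\textbf{Equality.} If $\sum_i\lambda_i^3$ attains one of the bounds, the point is a global extremum of $F$ on $K$, hence a critical point, hence two-valued with some multiplicity $k$. Strict monotonicity of $g$, which is where $n\ge3$ ensures $g(1)\neq g(n-1)$, forces $k=1$ for the maximum and $k=n-1$ for the minimum. In either case at least $n-1$ of the $\lambda_i$ are equal. Conversely, if $n-1$ of the $\lambda_i$ are equal, the computation above with $k=1$ or $k=n-1$ shows that equality holds on the corresponding side. The only step needing care is the monotonicity of $g$ together with the justification that extrema occur only at two-valued critical points; the rest is routine.
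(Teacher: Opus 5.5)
Your proof is correct and complete: the regularity of $K$, the reduction of critical points to two values $p>0>q$ with multiplicities $k$ and $n-k$, the value $\frac{n-2k}{\sqrt{nk(n-k)}}$, the derivative $g'(t)=-\frac{n^2}{2(t(n-t))^{3/2}}$, and the analysis of both directions of the equality case all check out. The paper gives no proof of this lemma and simply quotes it from Okumura; your Lagrange-multiplier computation is the classical route to Okumura's inequality, so there is no different in-paper argument to compare against.
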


Since $R=H^2-S$, the assumption
\[
R\geq \frac{n-2}{n-1}H^2>0
\]
implies
\[
H\neq0,\qquad S\leq \frac{H^2}{n-1}.
\]
Since $M^n$ is connected, $H$ has a fixed sign on $M^n$. Moreover, by Lemma \ref{le2-2}, $$\lambda\geq\sqrt{2n}>0.$$ Suppose that $H<0$. Then, using
\[
LH=S(\lambda-H)-\frac12H,
\]
we have
\[
LH>0
\]
on $M^n$, since $S\geq0$, $\lambda-H>0$, and $-H/2>0$. On the other hand, the self?adjointness of $L$ yields
\[
\int_{M^n} LH\, e^{|x|^2/4}\,dv=0,
\]
which is a contradiction. Hence $H>0$. Therefore,
\begin{equation}\label{th2-1}
H>0,\qquad S\leq\frac{H^2}{n-1}.
\end{equation}
Using (\ref{eq-0}) and the Okumura's inequality, we have
\begin{equation}\label{thm2-1}
\begin{split}
&Htr(h^3)-S^2=Htr(B^3)+\frac{H^2}{n}|B|^2-|B|^4\\
&\geq -|B|^2\Big(|B|^2+\frac{n-2}{\sqrt{n(n-1)}}\frac{H}{n}|B|-\frac{H^2}{n}\Big)\\
&=-|B|^2\Big(|B|-\frac{H}{\sqrt{n(n-1)}}\Big)\Big(|B|+\sqrt{n(n-1)}\frac{H}{n}\Big).
\end{split}
\end{equation}
Since $H>0$ and $\lambda>0$, combining (\ref{eq2-2}) and (\ref{thm2-1}), we obtain
\begin{equation}\label{thm2-2}
\begin{split}
L\Big(\frac{S}{H^2}\Big)&\geq-\frac{2}{H}\langle \nabla\Big(\frac{S}{H^2}\Big), \nabla H\rangle+\frac{2}{H^4}\sum_{ij,k}|Hh_{ij,k}-H_kh_{ij}|^2\\
&-\frac{2\lambda}{H^3}|B|^2\Big(|B|-\frac{H}{\sqrt{n(n-1)}}\Big)\Big(|B|+\sqrt{n(n-1)}\frac{H}{n}\Big).
\end{split}
\end{equation}
When $S\leq \frac{H^2}{n-1}$, then $|B|^2\leq \frac{H^2}{n(n-1)}$ and
\begin{equation}\label{thm2-3}
L\Big(\frac{S}{H^2}\Big)\geq 0.
\end{equation}
Since $M^n$ is closed, the function $\frac{S}{H^2}$ attains its maximum at some point $p\in M^n$, then
$$\frac{S}{H^2}(p)>0, ~~\nabla\Big(\frac{S}{H^2}\Big)(p)=0,~~L\Big(\frac{S}{H^2}\Big)(p)=\Delta\Big(\frac{S}{H^2}\Big)(p)\leq 0,$$
which is a contradiction. Hence  $L\Big(\frac{S}{H^2}\Big)\equiv0$ and $|B|^2=0$. Then $M^n$ is totally umbilical. Since $M^n$ is closed, it is a sphere centered at the origin and Theorem \ref{thm2} is proved.\\

{\bf Acknowledgements:} The first author is supported by the grant No. 12071028 of NSFC.

\end{document}